\documentclass[11pt,english]{amsart}
\usepackage[T1]{fontenc}
\usepackage[latin9]{inputenc}
\usepackage{prettyref}
\usepackage{mathtools}
\usepackage{enumitem}
\usepackage{amstext}
\usepackage{amsthm}
\usepackage{amssymb}
\usepackage{stmaryrd}

\makeatletter
\numberwithin{equation}{section}
\numberwithin{figure}{section}
\theoremstyle{plain}
\newtheorem{thm}{\protect\theoremname}[section]
\theoremstyle{plain}
\newtheorem{cor}[thm]{\protect\corollaryname}
\theoremstyle{plain}
\newtheorem{prop}[thm]{\protect\propositionname}

\makeatletter
\@addtoreset{thm}{section}
\makeatother
\usepackage{prettyref}
\usepackage{refstyle}
\usepackage{hyperref}
\usepackage{url}
\usepackage{mathtools}
\newref{claim}{refcmd={claim \ref{#1}}}

\makeatother

\usepackage{babel}
\providecommand{\corollaryname}{Corollary}
\providecommand{\propositionname}{Proposition}
\providecommand{\theoremname}{Theorem}

\begin{document}
\title{The eta invariant of a circle bundle on a Fano manifold}
\author{Nikhil Savale}
\address{School of Mathematics, Trinity College Dublin, Dublin 2, Ireland}
\email{savale@maths.tcd.ie}
\subjclass[2020]{58J28, 53C55}
\begin{abstract}
We consider the spin-c Dirac operator on the unit circle bundle of
a positive line bundle over a Fano manifold of even complex dimension.
We compute the corresponding eta invariant in terms of Zhang's value
of its adiabatic limit \cite{Zhang}. This extends the earlier computation
of the author \cite{Savale-Asmptotics} from small to arbitrary values
of the adiabatic parameter. 
\end{abstract}

\maketitle

\section{\label{sec:Introduction} Introduction}

The eta invariant of Atiyah-Patodi-Singer \cite{APSI} was introdiced
as a correction term to an index theorem for manifolds with boundary.
For a first order, elliptic and self-adjoint operator $A$ on a compact
manifold, the eta invariant $\eta(A)$ is formally its signature.
That is, the difference between the number of positive and the number
of negative eigenvalues of $A$. In reality, due to $A$ having infinitely
many eigenvalues, this needs to be defined via regularization (see
\prettyref{subsec:Dirac-operators-and}).

An important feature of the invariant $\eta(A)$, just like the signature
of a matrix, is that it is in general not a continuous function of
the operator $A$. This makes it difficult to compute the eta invariant
explicitly. In this article we present a calculation of the eta invariant
for circle bundles over a Fano manifold.

Let us state the result precisely. Let $X^{n}$ be a compact, complex
manifold of complex dimension $n$. Consider $\left(\mathcal{L},h^{\mathcal{L}}\right)\rightarrow X$
a positive holomorphic, Hermitian line bundle over it. Denote by $\nabla^{\mathcal{L}}$
the corresponding Chern connection and $\omega\coloneqq iR^{\mathcal{L}}\in\Omega^{1,1}\left(X\right)$
its curvature form. The positivity of the bundle defines via $h^{T^{1,0}X}\left(v,w\right)=R^{\mathcal{L}}\left(v,\bar{w}\right)$,
for $v,w\in T^{1,0}X$, a Kahler metric on the complex tangent space.
The corresponding Ricci curvature form is denoted by $\textrm{Ric}_{\omega}$;
this can be defined in local holomorphic coordinates via the expression
$\textrm{Ric}_{\omega}\coloneqq i\partial\bar{\partial}\ln\det\left(g_{j\bar{k}}\right)$,
where $\omega=ig_{j\bar{k}}dz_{j}\wedge d\bar{z}_{k}$. We shall assume
that the Ricci form is positive $\textrm{Ric}_{\omega}>\kappa\omega$
for some $\kappa>0$ (in fact $\kappa\geq0$ is sufficient). Since
$\textrm{Ric}_{\omega}$ is a representative of the Chern curvature
form of the anticanonical bundle $K_{X}^{*}$, this particularly means
that the the anticanonical bundle is ample, i.e. the manifold is Fano. 

Note that such metrics exist in abundance on Fano manifolds: being
projective Fano manifolds admit integral $\left(1,1\right)$ Kahler
forms $\omega'$. And hence admit cohomologous Kahler forms $\left[\omega\right]=\left[\omega'\right]$
with positive Ricci curvature by the Calabi-Yau theorem.

Now let $Y=S^{1}\mathcal{L}\xrightarrow{\pi}X$ denote the bundle
of unit elements of $\mathcal{L}$. The tangent bundle of total space
splits $TY=TS^{1}\oplus T^{H}Y$, via the connection $\nabla^{\mathcal{L}}$,
into the bundles of vertical $TS^{1}$ and horizontal tangent vectors
$T^{H}Y$. Below it is also useful to define the connection form $a\in\Omega^{1}\left(Y\right)$
via $a\left(HX\right)=0$ and $a\left(e\right)=1$, with $e\in TS^{1}$
being the generator of the natural circle action on the fibres. This
now allows us to equip with $Y$ with the family of adiabatic metrics
defined via 
\[
g_{\varepsilon}^{TY}\coloneqq g^{TS^{1}}\oplus\varepsilon^{-1}\pi^{*}g^{TX},\quad\forall\varepsilon>0,
\]
with the horizontal metric $\pi^{*}g^{TX}$ being pulled back from
the base.  

A spin structure on $X$ corresponds to a square root of its canonical
line bundle $K_{X}$, i.e. a holomorphic line bundle $\mathcal{K}$
such that $\mathcal{K}^{\otimes2}=K_{X}$ (see \cite{Hitchin}). The
spin structure on the base then lifts to define a spin structure on
the circle bundle $Y$. The corresponding bundle spinors is then the
pullback to $Y$ of the bundle $S\coloneqq\Lambda T^{0,1*}X\otimes\mathcal{K}$.
This now allows us to define the spin-c Dirac operator 
\begin{equation}
\text{\ensuremath{D_{r,\varepsilon}}}\coloneqq D_{\varepsilon}+rc\left(a\right):C^{\infty}\left(Y;S\right)\rightarrow C^{\infty}\left(Y;S\right),\quad r\in\mathbb{R},\label{eq:Spin Dirac operator}
\end{equation}
on the total space. Above $c\left(a\right)\alpha\coloneqq\left(-1\right)^{\textrm{deg}\alpha}\alpha$,
for $\alpha\in C^{\infty}\left(Y;\Lambda T^{0,1*}X\otimes\mathcal{K}\right)$,
denotes Clifford multiplication by the connection form $a$. The parameter
$r$ is referred to as the semiclassical parameter. The case when
$r=0$ is the spin Dirac operator $D_{\varepsilon}\coloneqq D_{0,\varepsilon}$
and is of special importance. It is related to the spin Dirac operator
on the unit disc bundle $Z\coloneqq\mathbb{D}^{1}\mathcal{L}\coloneqq\left\{ v\in\mathcal{L}|\left|v\right|\leq1\right\} $.
Namely, the unit disc bundle aquires a complex structure and metrix.
The spin structure on $X$ again lifts to a spin structure on the
disc bundle $Z$. And thus gives rise to the spin Dirac operator on
the disc bundle $D^{Z}$.

The eta invariant $\eta_{\varepsilon,r}=\eta\left(D_{r,\varepsilon}\right)$
of the spin-c Dirac operator is formally its signature and defined
via regularization (see \prettyref{subsec:Dirac-operators-and} below).
Our main result is the following computation of the eta invariant
when the real dimension $2n=4m$ is divisible by four.
\begin{thm}
\label{thm:main computation thm} Let $Y$ be the unit circle bundle
of a positive line bundle $\mathcal{L}\rightarrow X$ over a complex
manifold of real dimension $4m$. Suppose that the Ricci curvature
of the Kahler form $\omega=iR^{\mathcal{L}}$ satisfies $\textrm{Ric}_{\omega}\geq\kappa\omega$,
for $\kappa\geq0$. 

The eta invariant of the spin-c Dirac operator $\eta_{\varepsilon,r}=\eta\left(D_{r,\varepsilon}\right)$
\prettyref{eq:Spin Dirac operator} on the total space is then given
by 
\begin{align}
\eta_{\varepsilon,r} & =\frac{1}{2}\int_{X}\hat{A}(X)\,\hat{\eta}_{r}\exp\left\{ rc\right\} \label{eq:formula for eta invariant}\\
 & \qquad-\frac{i}{\left(2\pi i\right)^{m}}\int_{0}^{\varepsilon}d\delta\int_{X}\Omega_{2}\exp\left\{ \Omega_{0}\right\} \exp\left\{ rc\right\} \label{eq:formula for eta invariant line 2}\\
\textrm{where }\qquad\hat{\eta}_{r} & =\begin{cases}
\frac{\exp\left((1-2\{r\})\frac{c}{2}\right)}{\sinh\left(\frac{c}{2}\right)}-\frac{1}{c/2}, & r\notin\mathbb{Z},\\
\left[\frac{\frac{c}{2}-\tanh\left(\frac{c}{2}\right)}{\frac{c}{2}\tanh\left(\frac{c}{2}\right)}\right], & r\in\mathbb{Z},
\end{cases}\label{eq:eta tilde form}\\
\Omega_{0}= & 2\textrm{tr}\left[p\left(R^{TX^{1,0}}+2i\delta\omega\right)\right]+2p\left(2i\delta\omega\right),\\
\textrm{and }\qquad\Omega_{2}= & 2\textrm{tr}\left[ip'\left(R^{TX^{1,0}}+i2\delta\omega\right)\right]+i2p'\left(2i\delta\omega\right),\label{eq: transgression forms}
\end{align}
for each $\varepsilon>0$ and $\left|r\right|\leq\frac{1}{2}\kappa$. 

Here, $p(z)=\frac{1}{2}\log\left(\frac{z/2}{\sinh\left(z/2\right)}\right)$
is the given power series, $\hat{A}(X)$ denotes the $A$-genus of
$X$ while $c=\omega=c_{1}\left(\mathcal{L}\right)$ is the first
Chern form of the positive bundle.
\end{thm}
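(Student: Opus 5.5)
The plan is to compute $\eta_{\varepsilon,r}$ in two stages. First take the adiabatic limit $\varepsilon\to0$, which is given by Zhang's formula \cite{Zhang}. Then integrate the variation of $\eta_{\delta,r}$ in $\delta$ from $0$ to $\varepsilon$. Along such a one-parameter family the variation formula of Atiyah--Patodi--Singer (in Bismut--Freed form) reads
\[
\frac{d}{d\delta}\eta_{\delta,r}=\text{(local transgression term)}+2\,\frac{d}{d\delta}\left(\text{spectral flow}\right).
\]
So the whole argument rests on two points. First, no eigenvalue of $D_{r,\delta}$ crosses zero for $\delta\in(0,\varepsilon]$; this rules out jumps and gives a smooth dependence on $\delta$. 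Second, the local term can be identified, fibrewise over $X$, with the form $\Omega_{2}\exp\{\Omega_{0}\}$ integrated against $\exp\{rc\}$.

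\textbf{Step 1 (invertibility).} I would show that $\ker D_{r,\delta}=0$ for all $\delta>0$ and $|r|\leq\frac{1}{2}\kappa$, with $r\notin\mathbb{Z}$; when $r\in\mathbb{Z}$ the kernel should have constant dimension. Decompose $C^{\infty}(Y;S)=\bigoplus_{k\in\mathbb{Z}}C^{\infty}(X;\Lambda^{0,*}\otimes\mathcal{K}\otimes\mathcal{L}^{k})$ into Fourier modes along the circle. On the $k$-th mode, $D_{r,\delta}$ becomes
\[
\sqrt{\delta}\,\sqrt{2}\left(\bar\partial_{k}+\bar\partial_{k}^{*}\right)+(k+r)\,c(a)+\text{(zeroth order Clifford term from the curvature of the fibration)}.
\]
Squaring and applying the Bochner--Kodaira--Nakano formula on $\mathcal{K}\otimes\mathcal{L}^{k}$, the curvature term $\frac{1}{2}\mathrm{Ric}_{\omega}+k\omega$ appears. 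The hypothesis $\mathrm{Ric}_{\omega}\geq\kappa\omega$, together with $|r|\leq\kappa/2$, should give positivity on forms of positive degree. In degree zero, holomorphic sections must be analysed separately; there the eigenvalue is exactly $k+r$ (shifted), which is nonzero when $r\notin\mathbb{Z}$. This makes the spectrum away from zero uniformly in $\delta$, or at least nonvanishing at every $\delta$.

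\textbf{Step 2 (adiabatic limit).} By Step 1 the family is continuous in $\delta$, and $\lim_{\delta\to0}\eta_{\delta,r}$ is Zhang's value $\frac{1}{2}\int_{X}\hat{A}(X)\hat{\eta}_{r}e^{rc}$. This value is the Bismut--Cheeger eta form of the circle fibration, twisted by the spin-c shift, and it produces the two cases of \prettyref{eq:eta tilde form}. I would quote Zhang's computation, noting that his possible correction terms (the small eigenvalue contribution) vanish by Step 1. For $r\in\mathbb{Z}$, the kernel contributions would have to be tracked by the reduced eta convention.

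\textbf{Step 3 (variation).} Write $\frac{d}{d\delta}\eta_{\delta,r}=-\frac{2}{\sqrt{\pi}}\lim_{t\to0}\sqrt{t}\,\mathrm{tr}\left[\dot D e^{-tD^{2}}\right]$. I would compute this local coefficient using the fact that $Y$ is a circle bundle. Via the disc bundle $Z$, or via Bismut's local index computation for the adiabatic metric at finite $\delta$, the relevant heat kernel localizes to an equivariant/Getzler-rescaled computation on $X$. In that computation the fibre curvature $2i\delta\omega$ enters additively with $R^{TX^{1,0}}$. The Chern--Weil transgression of $\hat{A}$-type forms along the path $R^{TX^{1,0}}+2i\delta\omega$ then yields $\exp\{\Omega_{0}\}$ with $\Omega_{0}$ built from $p=\frac{1}{2}\log\frac{z/2}{\sinh z/2}$. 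Its $\delta$-derivative brings down $\Omega_{2}$, built from $p'$, and the normalisation gives the factor $\frac{-i}{(2\pi i)^{m}}$. Integrating from $0$ to $\varepsilon$ then gives line \prettyref{eq:formula for eta invariant line 2}.

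I expect the main obstacle to be Step 3. Producing exactly this closed-form local variation requires a careful Getzler rescaling in which the vertical direction is treated equivariantly, and the Clifford term $rc(a)$ must be shown to contribute only the factor $e^{rc}$. Step 1's handling of degree-zero forms, with the sharp constant $\kappa/2$, is the other delicate point.
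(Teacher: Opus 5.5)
Your architecture matches the paper's. It combines Zhang's adiabatic limit, the integrated variation of $\eta$ in $\delta$, and the absence of spectral flow. The paper phrases the variation as the APS theorem on $Y\times[0,\varepsilon]$ and simply quotes the form $\Omega_{2}\exp\{\Omega_{0}\}$ from earlier work, so your Step 3 is not where the new content lies.

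\textbf{The gap is Step 1.} This is the actual content of the proof, and the mechanism you propose does not establish it. On the $k$-th Fourier mode the operator is $(-1)^{N}f(N)+\sqrt{2\delta}(\bar\partial_{k}+\bar\partial_{k}^{*})$, with $f(N)=k-r-\delta(N-\frac{n}{2})$. Its square is $f(N)^{2}+2\delta\boxempty_{k}$ plus a first-order cross term of size $\delta^{3/2}$, coming from $f(N\pm1)\neq f(N)$. That cross term is not sign-definite, so Bochner--Kodaira--Nakano positivity does not give invertibility.

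\textbf{Positive degrees and the dual estimate.} The curvature term $\frac{1}{2}\mathrm{Ric}_{\omega}+k\omega$ is positive only for $k>-\frac{\kappa}{2}$. But $H^{n}(X;\mathcal{K}\otimes\mathcal{L}^{k})$ can be nonzero exactly when $k\leq-\frac{\kappa}{2}$. There you need the dual estimate \eqref{eq:Nakano inequality 2} and the sign of $k-r$, which your sketch never invokes.

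\textbf{Parity of $n$.} Your argument never uses that $n$ is even, yet for odd $n$ the claim is false. On the block spanned by $s$ and $\bar\partial_{k}s$ with $q=\frac{n-1}{2}$, the determinant is, up to sign, $(k-r)^{2}+\delta\mu^{2}-\frac{\delta^{2}}{4}$. This vanishes for large $\delta$.

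\textbf{Degree zero.} Your degree-zero analysis is also wrong. On $H^{0}(X;\mathcal{K}\otimes\mathcal{L}^{k})$ the eigenvalue is $k-r+\delta\frac{n}{2}$. It crosses zero at $\delta=\frac{2(r-k)}{n}$ whenever $k<r$, whether or not $r\in\mathbb{Z}$. This is exactly the failure exhibited in Section \ref{sec:Manifolds-of-general-type}. What prevents it is the spin vanishing theorem: $H^{0}(X;\mathcal{K}\otimes\mathcal{L}^{k})=0$ for $k<\frac{\kappa}{2}$.

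\textbf{What the paper does instead.} It uses the exact spectrum (Proposition \ref{prop:Computation of Dirac Spectrum}) in place of a positivity estimate.

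Type 1 eigenvalues $(-1)^{q}(k-r-\delta(q-\frac{n}{2}))$ live on $H^{q}(X;\mathcal{K}\otimes\mathcal{L}^{k})$. Both halves of Theorem \ref{thm: Spin-vanishing-and Nakano} are needed. Nonzero cohomology forces $k\geq\frac{\kappa}{2}$ if $q<n$, and $k\leq-\frac{\kappa}{2}$ if $q>0$. So the sign is fixed for $q=0$ and $q=n$. For $0<q<n$ one is forced into $\kappa=k=r=0$, where the eigenvalue is $\pm\delta(q-\frac{n}{2})$; this is either identically zero or never zero.

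Type 2 eigenvalues come from the $2\times2$ blocks on $\mathrm{span}\{s,\bar\partial_{k}s\}$, where $\bar\partial_{k}^{*}s=0$ and $\boxempty_{k}^{q}s=\frac{1}{2}\mu^{2}s$. Put $x=k-r$ and $a=q-\frac{n}{2}$. Such an eigenvalue vanishes iff
\[
x^{2}+\delta\left(\mu^{2}-(2a+1)x\right)+\delta^{2}a(a+1)=0.
\]
Evenness of $n$ gives $a(a+1)\geq0$. Both Nakano inequalities together with $|r|\leq\frac{\kappa}{2}$ give $\frac{1}{2}\mu^{2}>(q+\frac{1-n}{2})(k-r)$. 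Hence the left side is positive for every $\delta>0$.

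\textbf{A smaller point on Step 2.} Invertibility for $\delta>0$ does not remove eigenvalues that tend to $0$ as $\delta\to0$, for example on $H^{q}(X;\mathcal{K}\otimes\mathcal{L}^{r})$ when $r\in\mathbb{Z}$. So your claim that the small-eigenvalue corrections vanish by Step 1 is unjustified. Both you and the paper rely on the quoted adiabatic formula at that point.
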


A particular specialization of the above result is when $r=0$, which
corresponds to the spin Dirac operator. In this case, both terms in
the formula for the eta invariant \prettyref{eq:formula for eta invariant},
\prettyref{eq:formula for eta invariant line 2} vanish. For the first
term, we note that the function involving the hyperbolic tangent in
\prettyref{eq:eta tilde form} is an odd function of $c$. While the
$\hat{A}$ genus is of degree divisible by four. Thus the integrand
$\hat{A}(X)\,\hat{\eta}_{r}$ is a form whose degree is two modulo
four whose integral would vanish over the $4m$ dimensional manifold
$X$. A similar argument using the evenness of the function $p(z)=\frac{1}{2}\log\left(\frac{z/2}{\sinh\left(z/2\right)}\right)$
shows that the second term \prettyref{eq:formula for eta invariant line 2}
vanishes too. This gives the following corollary.
\begin{cor}
\label{cor: main corollary} Let $Y$ be the unit circle bundle of
a positive line bundle $\mathcal{L}\rightarrow X$ over a complex
manifold of real dimension $4m$. Suppose that the Ricci curvature
of the Kahler form $\omega=iR^{\mathcal{L}}$ is semipositive $\textrm{Ric}_{\omega}\geq0$.
The eta invariant of the spin Dirac operator $\eta_{\varepsilon}\coloneqq\eta\left(D_{\varepsilon}\right)=0$
\prettyref{eq:Spin Dirac operator} on the total space is vanishes. 

Consequently, the Atiyah-Patodi-Singer index of the spin Dirac operator
on the unit disc bundle $Z$ is given by 
\begin{equation}
\textrm{ind}\left(D^{Z}\right)=-\frac{1}{2}\sum_{\begin{subarray}{l}
\;k\in\mathbb{Z},\,0\leq p\leq n\\
k+\varepsilon\left(p-\frac{n}{2}\right)=0
\end{subarray}}\textrm{dim}H^{p}\left(X;\mathcal{K}\otimes L^{k}\right).\label{eq:APS index}
\end{equation}
\end{cor}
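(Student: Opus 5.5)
The plan has two parts: first show that $\eta_\varepsilon=0$ by specializing Theorem \ref{thm:main computation thm} to $r=0$, then feed this into the Atiyah--Patodi--Singer index theorem on the disc bundle $Z$. Since $\textrm{Ric}_\omega\geq0$, the hypothesis of Theorem \ref{thm:main computation thm} holds with $\kappa=0$. The condition $|r|\leq\frac12\kappa$ then forces $r=0$, which is exactly the value we need. At $r=0$ we have $\exp\{rc\}=1$ and $r\in\mathbb{Z}$, so the second case of \eqref{eq:eta tilde form} applies.

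\textbf{Vanishing of the first term \eqref{eq:formula for eta invariant}.} The function
\[
f(x)=\frac{x-\tanh x}{x\tanh x}
\]
is odd in $x$: the numerator is odd and the denominator is even. Hence $\hat\eta_0$, evaluated at $x=c/2$, contains only forms of degree $\equiv2 \pmod 4$. The class $\hat A(X)$ is a power series in Pontryagin forms, so it has degrees $\equiv0\pmod4$. The top-degree part of the product therefore lives in degree $\equiv2\pmod4$, and its integral over the $4m$-dimensional $X$ vanishes.

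\textbf{Vanishing of the second term \eqref{eq:formula for eta invariant line 2}.} Since $p$ is even, $p'$ is odd. Write the curvatures with their degrees: $R^{TX^{1,0}}$ and $\delta\omega$ are $2$-forms. Then $\textrm{tr}\,p(R+2i\delta\omega)$ is a sum of forms of degree $\equiv0\pmod4$. Likewise $\textrm{tr}\,p'(R+2i\delta\omega)$ is a sum of forms of degree $\equiv2\pmod4$, because odd powers of a $2$-form matrix have degree $\equiv 2 \pmod 4$. So $\Omega_0$ is concentrated in degrees $0\bmod 4$, $\exp\{\Omega_0\}$ is as well, and $\Omega_2$ lies in degrees $2\bmod4$. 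The integrand is then of degree $\equiv2\pmod4$, and it integrates to zero for every $\delta$. This gives $\eta_\varepsilon=0$ for every $\varepsilon>0$.

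\textbf{APS index formula.} Equip $Z$ with a metric that is of product type near $\partial Z=Y$ and restricts there to $g^{TY}_\varepsilon$. The APS theorem gives
\[
\textrm{ind}(D^Z)=\int_Z\hat A(Z)-\frac{h+\eta(D_\varepsilon)}{2},\qquad h=\dim\ker D_\varepsilon.
\]
We know $\eta(D_\varepsilon)=0$. The interior term also vanishes. Indeed, $Z$ retracts onto $X$ and $TZ\cong\pi^*(TX\oplus\mathcal L)$, so $\hat A(Z)$ is a pullback of forms from $X$ up to a transgression. With a product-type metric near the boundary, the top-degree part is a $(4m+2)$-form built from $(4m)$-dimensional base data and integrates to zero. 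The parity argument above can also be reused here, since $\hat A$ lives in degrees $0\bmod4$ and $4m+2\not\equiv0\pmod 4$, so there is no top-degree term. Thus $\textrm{ind}(D^Z)=-\tfrac12\dim\ker D_\varepsilon$.

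\textbf{Main obstacle: computing $\ker D_\varepsilon$.} Decompose sections of $S$ over $Y$ into Fourier modes $e^{ik\theta}$ under the circle action; the $k$-th mode corresponds to sections of $\Lambda^{0,*}\otimes\mathcal K\otimes L^k$ on $X$. On that mode, the adiabatic Dirac operator takes the form
\[
D_\varepsilon=c(a)\,(k+\text{degree shift})+\sqrt{\varepsilon}\,\sqrt2(\bar\partial+\bar\partial^*).
\]
The vertical part acts on $(0,p)$-forms by the scalar $(-1)^p\bigl(k+\varepsilon(p-\tfrac n2)\bigr)$, after normalizing. The key identity to check is that the cross terms anticommute, so
\[
D_\varepsilon^2=\bigl(k+\varepsilon(p-\tfrac n2)\bigr)^2+2\varepsilon\,\Box_k.
\]
Given this, the kernel consists of the $\Box_k$-harmonic $(0,p)$-forms with $k+\varepsilon(p-\frac n2)=0$. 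By Hodge theory these are $H^p(X;\mathcal K\otimes L^k)$, which yields \eqref{eq:APS index}. The delicate point is fixing the exact normalization of the vertical term, specifically the shift $\varepsilon(p-\frac n2)$ coming from the Bismut--Cheeger/O'Neill curvature term. I would extract it by carrying out the Lichnerowicz computation for the $\varepsilon$-scaled metric on a single $(0,p)$-form.
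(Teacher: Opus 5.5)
Your derivation of $\eta_\varepsilon=0$ is the paper's argument: Theorem \ref{thm:main computation thm} at $r=0$, $\kappa=0$, plus degree parity. Your parity argument for $\int_Z\hat A(Z)=0$ is also fine; the retraction/transgression remark is not needed.

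The gap is the ``key identity'' $D_\varepsilon^2=\bigl(k+\varepsilon(p-\frac n2)\bigr)^2+2\varepsilon\Box_k$, which is false. On the $k$-th mode write $D_\varepsilon=V+H$, with $V=(-1)^Na_N$, $a_p=k+\varepsilon(p-\frac n2)$, and $H=\sqrt{2\varepsilon}(\bar\partial_k+\bar\partial_k^*)$. The sign $(-1)^N$ anticommutes with $H$, but the degree-dependent scalar $a_N$ does not commute with it: $\bar\partial_k,\bar\partial_k^*$ shift $N$ by $\pm1$ and $a_{p\pm1}=a_p\pm\varepsilon$. A direct computation gives $VH+HV=\varepsilon\sqrt{2\varepsilon}\,(\bar\partial_k^*-\bar\partial_k)(-1)^N\neq0$.

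This cross term is the origin of the Type 2 eigenvalues in Proposition \ref{prop:Computation of Dirac Spectrum}. Suppose $\Box_k^q s=\frac12\mu^2 s$ with $\bar\partial_k^*s=0\neq\bar\partial_k s$. Then on $\textrm{span}\{s,\bar\partial_k s\}$, $D_\varepsilon$ is the symmetric matrix with diagonal entries $(-1)^qa_q$, $(-1)^{q+1}a_{q+1}$ and off-diagonal entries $\sqrt{\varepsilon}\,\mu$. Its trace is $(-1)^{q+1}\varepsilon\neq0$, so $D_\varepsilon^2$ is not diagonal in the degree. Its determinant vanishes if and only if $a_qa_{q+1}=-\varepsilon\mu^2$, which requires $a_q,a_{q+1}$ of opposite signs and $\mu^2\leq\varepsilon/4$. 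So a priori $\ker D_\varepsilon$ can contain non-harmonic spinors, and nothing in your argument excludes them. In particular, your kernel computation never invokes the hypothesis $\textrm{Ric}_\omega\geq0$.

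The missing step is to show that no such $2\times2$ block is singular; this is where the curvature hypothesis and the evenness of $n$ enter. Note that $\frac12\mu^2$ is an eigenvalue of both $\Box_k^q$ and $\Box_k^{q+1}$. The Nakano inequalities of Section \ref{sec:Spin-vanishing-and} with $\kappa=0$ then give $\frac12\mu^2\geq\max\{(q+1)k,\,-(n-q)k\}$. A short case check ($k>0$, $k<0$, and $k=0$, the last using that $n$ is even) shows this is incompatible with $a_qa_{q+1}=-\varepsilon\mu^2$. In the paper's conventions this is the inequality verified in the proof of Theorem \ref{thm:sf vanishes}, and at $r=0$ it is strict: $4k^2>0$ if $k\neq0$, and the $\varepsilon$-coefficient is $4\mu^2>0$ if $k=0$. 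Only after this is $\ker D_\varepsilon$ the sum of the Type 1 spaces $H^p(X;\mathcal{K}\otimes\mathcal{L}^k)$ over the zero set of the vertical scalar, which is what the paper's ``consequently'' tacitly uses.

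Finally, do not fix the sign of the shift by ``normalizing''. Proposition \ref{prop:Computation of Dirac Spectrum} records the Type 1 eigenvalue at $r=0$ as $(-1)^q\bigl(k-\varepsilon(q-\frac n2)\bigr)$, opposite to your sign and to the displayed formula. The two resulting sums genuinely differ: for $\mathcal{O}(1,1)\to\mathbb{CP}^1\times\mathbb{CP}^1$ at $\varepsilon=1$ they are $0$ and $2$. The sign therefore has to come out of the Lichnerowicz computation you propose.
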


A main component in the proof of the above is a Kodaira type 'spin
vanishing theorem' for the positive line bundle $\mathcal{L}$ twisted
by the square root bundle $\mathcal{K}$ proved in \prettyref{sec:Spin-vanishing-and}.
It via a corresponding Nakano estimate via the Bochner-Kodaira-Nakano
formula. The proof of this estimate is where the Fano hypothesis on
the manifold $X$ is used. In the final \prettyref{sec:Manifolds-of-general-type}
we shall give examples of manifold of general type where the similar
vanishing theorem does not hold.

The eta invariant has been computed, and its behaviour investigated,
in various cases (see the survey \cite{Goette-2012-etasurvey}). Relevant
to the discussion here is the adiabatic limit of the eta invariant
over fiber bundles whose existence was shown in \cite{Bismut-Cheeger,Dai}.
For general circle bundles the limit has been computed by Zhang in
\cite{Zhang}.

Our result should be contrasted with the one proved by the author
in \cite[Thm 5.7]{Savale-Asmptotics}. Here the above eta invariant
$\eta_{\varepsilon,r}$ was computed for arbitrary values of the semiclassical
parameter $r$ but small values of the adiabatic parameter $\varepsilon$.
The smallness of $\varepsilon$ is quantified in terms of the bottom
of the spectrum of the Kodaira Laplacian acting of tensor powers of
$\mathcal{L}$. The computation used Zhang's formula for the adiabatic
limit \cite{Zhang}, as indeed the first line on the right hand side
of \prettyref{eq:formula for eta invariant} in the adiabatic limit
$\lim_{\varepsilon\rightarrow0}\eta_{\varepsilon,r}$. The computation
in \cite{Savale-Asmptotics} was furthermore used by the author in
\cite{Savale2017-Koszul,Savale-Gutzwiller,Savale2020-hyperbolicity}
to show the sharpness of the asymptotics of the eta invariant in the
semi-classical limit $r\rightarrow\infty$. In contrast, our main
result \prettyref{thm:main computation thm} here computes the same
eta invariant $\eta_{\varepsilon,r}$ for small values of the semiclassical
parameter $r$ but arbitrary values of the adiabatic parameter $\varepsilon$.
The smallness of the semiclassical parameter $r$ is quantified by
the lower bound on the Ricci curvature. 

The article is organized as follows. In \prettyref{sec:Preliminaries}
we begin with some preliminaries on complex geometry \prettyref{subsec:Complex-geometry}
and Dirac operators \prettyref{subsec:Dirac-operators-and}. In \prettyref{thm: Spin-vanishing-and Nakano}
we prove the important spin vanishing theorem and Nakano estimate
on which the proof is based. This is then used in \prettyref{sec:Computation-of-the}
in proving \prettyref{thm:main computation thm} on the computation
of the eta invariant. In the final \prettyref{sec:Manifolds-of-general-type},
we give an example of a manifold of general type where the main theorem
does not hold; thus showing the necessity of the Fano hypothesis.

\section{\label{sec:Preliminaries} Preliminaries}

In this section we review some preliminary notions on complex geometry
and Dirac operators that are used in the article.

\subsection{\label{subsec:Complex-geometry}Complex geometry}

First we begin with some requisite facts from the complex geometry
of positive vector bundles. Standard references for the material below
are \cite{Demailly-published-text,Ma-Marinescu}. Let $X$ be a complex
manifold. Let $\left(\mathcal{L},h^{\mathcal{L}}\right)$ be a holomorphic,
Hermitian line bundle. Its holomorphic structure defines the Dolbeault
operator $\bar{\partial}_{\mathcal{L}}:\Omega^{p,q}\left(X;\mathcal{L}\right)\rightarrow\Omega^{p,q+1}\left(X;\mathcal{L}\right)$
on $\left(p,q\right)$-forms that are valued in sections of the line
bundle. Its Chern connection $\nabla^{\mathcal{L}}:\Omega^{0}\left(X;\mathcal{L}\right)\rightarrow\Omega^{1}\left(X;\mathcal{L}\right)$
is the unique connection on it that is compatible with the Hermitian
metric $h^{\mathcal{L}}$ and whose $\left(0,1\right)$ component
is its holomorphic derivative $\left(\nabla^{\mathcal{L}}\right)^{0,1}=\bar{\partial}_{\mathcal{L}}$.
The curvature of this connection $R^{\mathcal{L}}=\left(\nabla^{\mathcal{L}}\right)^{2}\in\Omega^{1,1}\left(X\right)$
is referred to as the Chern connection. 

The line bundle $\left(\mathcal{L},h^{\mathcal{L}}\right)$ is said
to be \textit{positive} when $iR^{\mathcal{L}}\left(w,\bar{w}\right)>0$
for each non-zero complex tangent vector $w\in T^{1,0}X\setminus0$.
Under the positivity condition, one may define a Hermitian metric
on the complex tangent space $T^{1,0}X$ via $h^{T^{1,0}X}\left(v,w\right)=R^{\mathcal{L}}\left(v,\bar{w}\right)$,
$\forall v,w\in T^{1,0}X$. We also denote by $g^{TX}$ the underlying
Riemannian metric on $X$. The above metrics can be combined to define
an $L^{2}$-inner product on the $\Omega^{p,q}\left(X;\mathcal{L}\right)$.
As well as adjoint $\bar{\partial}_{\mathcal{L}}^{*}$ to the Dolbeault
operator with respect to these. The Kodaira Laplacian is the composition
\[
\boxempty_{\mathcal{L}}^{\left(p,q\right)}\coloneqq\bar{\partial}_{\mathcal{L}}^{*}\bar{\partial}_{\mathcal{L}}+\bar{\partial}_{\mathcal{L}}\bar{\partial}_{\mathcal{L}}^{*}
\]
which preserves the bi-degree $\left(p,q\right)$ of the form. The
Hodge theorem identifies the kernel of the Kodaira Laplacian with
the Dolbeault cohomology $\textrm{ker}\boxempty_{\mathcal{L}}^{\left(p,q\right)}=H^{p,q}\left(X;\mathcal{L}\right)$.
Of particular interest is the bidegree $\left(0,q\right)$. Here we
denote by the shorthand $\boxempty_{\mathcal{L}}^{q}\coloneqq\boxempty_{\mathcal{L}}^{\left(0,q\right)}$
and $H^{q}\left(X;\mathcal{L}\right)=H^{0,q}\left(X;\mathcal{L}\right)$.
The above quantities can be similarly defined for tensor powers $\mathcal{L}^{k}\coloneqq\mathcal{L}^{\otimes k}$
of the line bundle and its twists $F\otimes\mathcal{L}^{k}$ by another
auxiliary Hermitian, holomorphic line bundle $F$. 

For positive line bundles, the Kodaira and Serre vanishing theorems
state that one has
\begin{align}
H^{q}\left(X;K_{X}\otimes\mathcal{L}\right) & =0,\qquad\textrm{for }q>0,\label{eq:Kodaira vanishing}\\
H^{q}\left(X;F\otimes\mathcal{L}^{k}\right) & =0,\qquad\textrm{for }q>0\textrm{ and }k\gg0,\label{eq:Serre vanishing}
\end{align}
respectively. The above are proved using the Nakano estimates
\begin{align*}
\left\langle \boxempty_{K_{X}}^{q}s,s\right\rangle  & \geq q\left\Vert s\right\Vert ^{2},\quad\forall s\in\Omega^{0,q}\left(X;K_{X}\otimes\mathcal{L}\right),\\
\left\langle \boxempty_{F\otimes\mathcal{L}^{k}}^{q}s,s\right\rangle  & \geq\left(c_{1}k-c_{2}\right)\left\Vert s\right\Vert ^{2},\quad\forall s\in\Omega^{0,q}\left(X;\mathcal{L}^{k}\right),q>0,
\end{align*}
with the latter being claimed for some positive constants $c_{1},c_{2}>0$
that are independent of the power $k$.

\subsection{\label{subsec:Dirac-operators-and} Dirac operators and the eta invariant }

\noindent We now state some requisites about Dirac operators used
in the paper, a standard reference is \cite{Berline-Getzler-Vergne}.
Let $\left(X,g^{TX}\right)$ be a compact, oriented, Riemannian manifold
of odd dimension $n$. A spin structure is a $\textrm{Spin}\left(n\right)$
principal bundle $\textrm{Spin}\left(TX\right)\rightarrow SO\left(TX\right)$
that is an equivariant double covering of the $SO\left(n\right)$
principal bundle $SO\left(TX\right)$ of the orthonormal frames in
$TX$. There is a unique irreducible representation of $\textrm{Spin}\left(n\right)$
that gives rise to the associated spin bundle $S=\textrm{Spin}\left(TX\right)\times_{\textrm{Spin}\left(n\right)}S_{2m}$
. The Levi-Civita connection $\nabla^{TX}$ on the tangent bundle
$TX$ lifts to a connection on $\textrm{Spin}\left(TX\right)$. And
thus gives rise to the spin connection $\nabla^{S}$ on the spin bundle
$S$. The Clifford multiplication endomorphism $c:T^{*}X\rightarrow S\otimes S^{*}$
is the one arising from the standard representation of the Clifford
algebra of $T^{*}X$. It satisfies
\begin{align*}
c(a)^{2}=-|a|^{2}, & \quad\forall a\in T^{*}X.
\end{align*}
Next choose $\left(L,h^{L}\right)$ a Hermitian line bundle on $X$
along with a unitary connection $A_{0}$ on it. An additional one-form
$a\in\Omega^{1}(X;\mathbb{R})$ on $X$ gives rise to the family $\nabla^{h}=A_{0}+\frac{i}{h}a$,
$h\in\left(0,1\right]$ of unitary connections on $L$. We denote
the corresponding tensor product connection on $S\otimes L$ by $\nabla^{S\otimes L}\coloneqq\nabla^{S}\otimes1+1\otimes\nabla^{h}$.
This defines the coupled Dirac operator via
\begin{align*}
D_{h}\coloneqq hD_{A_{0}}+ic\left(a\right)=hc\circ\left(\nabla^{S\otimes L}\right):C^{\infty}(X;S\otimes L)\rightarrow C^{\infty}(X;S\otimes L)
\end{align*}
for $h\in\left(0,1\right]$. The Dirac operator $D_{h}$ is elliptic
and self-adjoint and thus has a discrete spectrum of eigenvalues. 

The eta function of $D_{h}$ is now defined by the formula below
\begin{align}
\eta\left(D_{h},s\right)\coloneqq & \sum_{\begin{subarray}{l}
\quad\:\lambda\neq0\\
\lambda\in\textrm{Spec}\left(D_{h}\right)
\end{subarray}}\textrm{sign}(\lambda)|\lambda|^{-s}=\frac{1}{\Gamma\left(\frac{s+1}{2}\right)}\int_{0}^{\infty}t^{\frac{s-1}{2}}\textrm{tr}\left(D_{h}e^{-tD_{h}^{2}}\right)dt,\label{eq:eta invariant definition}
\end{align}
$\forall s\in\mathbb{C}$. Here we have used the convention that $\textrm{Spec}(D_{h})$
is a multiset with each eigenvalue of $D_{h}$ being counted with
its multiplicity. The above series converges for $\textrm{Re}(s)>n.$
In \cite{APSI,APSIII} it is shown that the eta function \prettyref{eq:eta invariant definition}
has a meromorphic continuation to the entire complex $s$-plane and
further has no pole at zero. The eta invariant of the Dirac operator
$D_{h}$ is then defined to be its value at zero
\begin{equation}
\eta_{h}\coloneqq\eta\left(D_{h},0\right).\label{eq:eta invariant}
\end{equation}
We observe from \prettyref{eq:eta invariant definition} that the
above is formally the signature of the Dirac operator, i.e. the difference
between the number of its positive and negative eigenvalues. A variant
of the above, known as the reduced eta invariant, is defined by including
the zero eigenvalue 
\begin{align*}
\bar{\eta}_{h}\coloneqq & \frac{1}{2}\left\{ k_{h}+\eta_{h}\right\} \\
k_{h}\coloneqq & \textrm{dim ker }\left(D_{h}\right).
\end{align*}

\section{\label{sec:Spin-vanishing-and} Spin vanishing and Nakano estimate}

In this section we shall prove a variant of the Kodaira vanishing
theorem and Nakano estimate on a Fano manifold. These shall be used
in the next section in our computation of the eta invariant. 

To state the result, consider again $X$ our compact, complex manifold
of complex dimension $n$. As well as $\left(\mathcal{L},h^{\mathcal{L}}\right)\rightarrow X$
a positive holomorphic, Hermitian line bundle over it. Denote by $\nabla^{\mathcal{L}}$
the associated Chern conection on $\mathcal{L}$ and $R^{\mathcal{L}}\in\Omega^{1,1}\left(X\right)$
its curvature. The positivity of the bundle is defined by the condition
that $R^{\mathcal{L}}\left(w,\bar{w}\right)>0$ for each $w\in T^{1,0}X\setminus\left\{ 0\right\} $.
Then $\omega=\frac{i}{2\pi}R^{\mathcal{L}}\in\Omega^{1,1}\left(X\right)$
defines an integral Kahler form on the manifold whose cohomology class
$\left[\frac{i}{2\pi}R^{\mathcal{L}}\right]=c_{1}\left(\mathcal{L}\right)$
represents the first Chern class of the line bundle. 

The positivity of the bundle defines via $h^{T^{1,0}X}\left(v,w\right)=R^{\mathcal{L}}\left(v,\bar{w}\right)$,
for $v,w\in T^{1,0}X$, a Kahler metric on the complex tangent space.
We denote by $g^{TX}$ the corresponding associated Riemannian metric
on $X$.

A spin structure on $X$ corresponds to a holomorphic, Hermitian square
root $\mathcal{K}$ of the canonical line bundle $\mathcal{K}^{\otimes2}=K_{X}$.
The corresponding bundles of positive and negative spinors are $\Lambda^{\text{even}}T^{0,1*}\otimes\mathcal{K}$
and $\Lambda^{\text{odd}}T^{0,1*}\otimes\mathcal{K}$. We denote by
$\nabla^{\mathcal{K}}$ and $\nabla^{0,*}$ the corresponding Chern
connections on $\mathcal{K}$ and $\Lambda^{*}T^{0,1*}\otimes\mathcal{K}$
respectively. The Clifford multiplication map is given by 
\begin{align}
c & :T^{*}X\rightarrow\textrm{End}\left(\Lambda^{*}T^{0,1*}\otimes\mathcal{K}\right)\nonumber \\
c\left(v\right) & =\sqrt{2}\left(v^{1,0}\wedge-i_{v^{0,1}}\right),\quad\forall v\in T^{*}X,\label{eq:Clifford multiplication map}
\end{align}
while the spin Dirac operator is $D_{\mathcal{K}}=\sqrt{2}(\bar{\partial}_{\mathcal{K}}+\bar{\partial}_{\mathcal{K}}^{*})$.
Here $\bar{\partial}_{\mathcal{K}}$ is the holomorphic derivative
on $\Lambda^{*}T^{0,1*}\otimes\mathcal{K}$ while $\bar{\partial}_{\mathcal{K}}^{*}$
is the . A twisted Dirac operator is similarly defined 
\begin{equation}
D_{\mathcal{K}\otimes\mathcal{L}^{\otimes k}}=\sqrt{2}(\bar{\partial}_{\mathcal{K}\otimes\mathcal{L}^{k}}+\bar{\partial}_{\mathcal{K}\otimes\mathcal{L}^{k}}^{*}):\Omega^{0,*}\left(\mathcal{K}\otimes\mathcal{L}^{k}\right)\rightarrow\Omega^{0,*}\left(\mathcal{K}\otimes\mathcal{L}^{k}\right)\label{eq:Dirac operator base}
\end{equation}
acting on anti-holomorphic forms with valueed in the sections of $\mathcal{K}\otimes\mathcal{L}^{k}$
for each $k\in\mathbb{Z}$. The square of the above is the Kodaira
Laplacian $\boxempty_{\mathcal{K}\otimes\mathcal{L}^{k}}\coloneqq D_{\mathcal{K}\otimes\mathcal{L}^{k}}$.
This preserves the degree of the anti-holomorphic form and we denote
by $\boxempty_{\mathcal{K}\otimes\mathcal{L}^{k}}^{q}$ its restriction
to degree $q$.

Recall that the classical Kodaira vanishing theorem states 
\begin{equation}
H^{q}\left(X,K_{X}\otimes\mathcal{L}^{k}\right)=0,\qquad\textrm{for }q,k>0.\label{eq:Kodaira vanishing-1}
\end{equation}
Furthermore, it is obtained via Hodge theory and the Nakano estimate
\begin{equation}
\left\langle \boxempty_{K_{X}\otimes\mathcal{L}^{k}}^{q}s,s\right\rangle \geq qk\left\Vert s\right\Vert ^{2}\label{eq:Nakano estimate}
\end{equation}
for the Kodaira Laplacian $\boxempty_{K_{X}\otimes\mathcal{L}^{k}}^{q}:\Omega^{0,q}\left(K_{X}\otimes\mathcal{L}^{k}\right)\rightarrow\Omega^{0,q}\left(K_{X}\otimes\mathcal{L}^{k}\right)$
acting on anti-holomorphic $q$ forms. 

We shall now prove a variant of the above where the canonical bundle
is replaced by its square root. 
\begin{thm}
\label{thm: Spin-vanishing-and Nakano} (Spin vanishing and Nakano
inequality) Let $\mathcal{L}\rightarrow X$ be a positive line bundle
over a compact complex manifold and $\mathcal{K}$ a holomorphic,
Hermitian square root of the canonical bundle $K_{X}$. Suppose that
the Ricci curvature of the Kahler form $\omega=iR^{\mathcal{L}}$
satisfies $\textrm{Ric}_{\omega}\geq\kappa\omega$, for $\kappa\geq0$. 

Then one has the Nakano inequality 
\begin{align}
\left\langle \boxempty_{\mathcal{K}\otimes\mathcal{L}^{k}}^{q}s,s\right\rangle  & \geq q\left(k+\frac{1}{2}\kappa\right)\left\Vert s\right\Vert ^{2},\label{eq:Nakano inequality-1}\\
\left\langle \boxempty_{\mathcal{K}\otimes\mathcal{L}^{k}}^{q}s,s\right\rangle  & \geq\left(n-q\right)\left(-k+\frac{1}{2}\kappa\right)\left\Vert s\right\Vert ^{2},\label{eq:Nakano inequality 2}
\end{align}
$\forall s\in\Omega^{0,q}\left(\mathcal{K}\otimes\mathcal{L}\right)$,
for the Kodaira Laplacian $\boxempty_{\mathcal{K}\otimes\mathcal{L}^{k}}^{q}:\Omega^{0,q}\left(\mathcal{K}\otimes\mathcal{L}^{k}\right)\rightarrow\Omega^{0,q}\left(\mathcal{K}\otimes\mathcal{L}^{k}\right)$
acting on anti-holomorphic $q$-forms valued in the the square root
bundle $\mathcal{K}$.

Furthermore, one has the vanishing theorem 
\begin{align}
H^{q}\left(X;\mathcal{K}\otimes\mathcal{L}^{k}\right) & =\begin{cases}
0, & \textrm{for }q>0,\,k>-\frac{1}{2}\kappa,\\
0, & \textrm{for }q<n,\,k<\frac{1}{2}\kappa.
\end{cases}\label{eq:spin vanishing thm.}
\end{align}
\end{thm}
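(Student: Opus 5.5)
The plan is to reduce everything to the Bochner--Kodaira--Nakano formula for bundle-valued $(0,q)$-forms, using the fact that $\mathcal{K}^{-1}$ has curvature equal to half the Ricci form. Write $E_k\coloneqq\mathcal{K}\otimes\mathcal{L}^{k}$. Since $\mathcal{K}^{\otimes2}=K_X$, we have $\mathcal{K}^{-1}\otimes\mathcal{K}^{-1}=K_X^{*}$. Hence $E_k=K_X\otimes\left(\mathcal{K}^{-1}\otimes\mathcal{L}^{k}\right)$, and $F_k\coloneqq\mathcal{K}^{-1}\otimes\mathcal{L}^{k}$ carries the induced metric with curvature
\[
iR^{F_k}=\tfrac{1}{2}\textrm{Ric}_{\omega}+k\,\omega\geq\left(k+\tfrac{1}{2}\kappa\right)\omega .
\]
Here the identification $iR^{K_X^{*}}=\textrm{Ric}_{\omega}$ holds for the metric induced by $g^{TX}$. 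I take the Hermitian metric on $\mathcal{K}$ to be the induced square root of this one. For the Nakano estimate I would use the standard identification $\Omega^{0,q}(X;K_X\otimes F)\cong\Omega^{n,q}(X;F)$ together with the Bochner--Kodaira--Nakano identity on the Kähler manifold $(X,\omega)$:
\[
\boxempty_{F}^{(n,q)}=\boxempty_{\nabla^{1,0}}^{(n,q)}+\left[iR^{F},\Lambda\right].
\]
The first term on the right is nonnegative. On $(n,q)$-forms the curvature term is pointwise bounded below by the sum of the $q$ smallest eigenvalues of $iR^{F}$ relative to $\omega$. With the eigenvalue bound above, this gives
\[
\left\langle \boxempty^{q}_{E_k}s,s\right\rangle\geq q\left(k+\tfrac12\kappa\right)\|s\|^{2},
\]
which is \eqref{eq:Nakano inequality-1}. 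This is exactly the argument behind the classical estimate \eqref{eq:Nakano estimate}, with $\mathcal{L}^{k}$ replaced by $F_k$.

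For \eqref{eq:Nakano inequality 2} I would use Serre duality at the level of forms. The conjugate-linear Hodge star $\bar{*}\colon\Omega^{0,q}(E_k)\to\Omega^{n,n-q}(E_k^{*})\cong\Omega^{0,n-q}(K_X\otimes E_k^{*})$ intertwines the Kodaira Laplacians. Its target bundle is
\[
K_X\otimes\mathcal{K}^{-1}\otimes\mathcal{L}^{-k}=\mathcal{K}\otimes\mathcal{L}^{-k}=E_{-k}.
\]
Applying \eqref{eq:Nakano inequality-1} in degree $n-q$ with $k$ replaced by $-k$ yields $(n-q)\left(-k+\tfrac12\kappa\right)$, as required. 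Alternatively, one can use the Nakano identity with the opposite sign convention, $\boxempty=\boxempty_{\nabla^{1,0}}+[iR,\Lambda]$ on $(0,q)$-forms expressed through $\bar\partial$ versus $\partial$. The duality route is cleaner, and it is the one I would follow.

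The vanishing statement \eqref{eq:spin vanishing thm.} then follows from Hodge theory, $H^{q}(X;E_k)\cong\ker\boxempty^{q}_{E_k}$. If $q>0$ and $k>-\tfrac12\kappa$, the constant in \eqref{eq:Nakano inequality-1} is positive, so any harmonic $s$ vanishes. If $q<n$ and $k<\tfrac12\kappa$, the same conclusion follows from \eqref{eq:Nakano inequality 2}.

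The main obstacle I expect is bookkeeping rather than conceptual. First, the normalizations must be matched: the statement uses $\omega=iR^{\mathcal{L}}$ in places and $\frac{i}{2\pi}R^{\mathcal{L}}$ in others, and the eigenvalue bound must be checked to produce exactly $k+\frac12\kappa$ under the metric convention $h^{T^{1,0}X}=R^{\mathcal{L}}(\cdot,\bar\cdot)$. Second, one must verify that the Chern connection on $\Lambda^{0,*}\otimes\mathcal{K}$ agrees with that on $\Lambda^{n,*}\otimes\mathcal{K}^{-1}$ under the isomorphism, so that the Laplacians coincide. Both points are local computations in normal holomorphic coordinates, which I would carry out first.
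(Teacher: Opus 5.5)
Your proposal is correct and is essentially the paper's argument: the paper applies the Bochner--Kodaira--Nakano formula on $(0,q)$-forms, where the curvature term is that of $\mathcal{K}\otimes\mathcal{L}^{k}\otimes K_X^{*}=\mathcal{K}^{-1}\otimes\mathcal{L}^{k}$, which is exactly your $F_k$ after the identification $\Omega^{0,q}(K_X\otimes F_k)\cong\Omega^{n,q}(F_k)$, and both arguments obtain the second inequality by Hodge-star (Serre) duality. Your explicit check that the dual bundle is $K_X\otimes E_k^{*}=E_{-k}$, so that the first inequality is applied in degree $n-q$ with $k$ replaced by $-k$, spells out the step the paper abbreviates as $*\boxempty^{q}*=\boxempty^{n-q}$.
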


\begin{proof}
The proof uses the standard Bochner argument. Namely, the Bochner-Kodaira-Nakano
formula gives 
\begin{equation}
\boxempty_{\mathcal{K}\otimes\mathcal{L}^{k}}=\underbrace{\left(\nabla^{0,*}\right)^{*}\nabla^{0,*}}_{=\Delta^{0,*}}+c\left(R^{\mathcal{K}\otimes\mathcal{L}^{k}\otimes K_{X}^{*}}\right)\label{eq:Bochner Kodaira Nakano}
\end{equation}
\cite[1.4.63]{Ma-Marinescu}. Here the first term $\Delta^{0,*}$
is the Bochner Laplacian corresponding to the Chern connection on
$\Lambda^{*}T^{0,1*}\otimes\mathcal{K}$. While the second term $c\left(R^{\mathcal{K}\otimes\mathcal{L}^{k}\otimes K_{X}^{*}}\right)$
is the Clifford multiplication by the Chern curvature of the given
bundle, that may be written via 
\begin{equation}
c\left(R^{\mathcal{K}\otimes\mathcal{L}^{k}\otimes K_{X}^{*}}\right)=R^{\mathcal{K}\otimes\mathcal{L}^{k}\otimes K_{X}^{*}}\left(w_{j},\bar{w}_{k}\right)\bar{w}_{k}\wedge i_{w_{j}}\label{eq:Clifforn mult. in basis}
\end{equation}
in terms of an orthonormal basis $\left\{ w_{j}\right\} _{j=1}^{n}$
of the complex tangent space $T^{1,0}X$.

We now note that the Chern curvature of the anticanonical bundle $K_{X}^{*}$
and the square root $\mathcal{K}$ bundle are given 
\begin{equation}
R^{K_{X}^{*}}=-2R^{\mathcal{K}}=\textrm{Ric}_{\omega}\in\Omega^{1,1}\left(X\right)\label{eq:Chern curvature and Ricci}
\end{equation}
in terms of the Ricci curvature form of the metric $\omega=iR^{\mathcal{L}}$.
Using the lower bound $\textrm{Ric}_{\omega}\geq\kappa\omega$ on
the Ricci curvature, the above relations \prettyref{eq:Chern curvature and Ricci}
then give the inequality
\begin{equation}
\left\langle c\left(R^{\mathcal{K}\otimes\mathcal{L}^{k}\otimes K_{X}^{*}}\right)s,s\right\rangle \geq q\left(k+\frac{1}{2}\kappa\right)\left\Vert s\right\Vert ^{2}.\label{eq:Cliff mult ineq}
\end{equation}
Since the Bochner Laplacian $\Delta^{0,*}$ is a positive operator,
the Nakano inequality \prettyref{eq:Nakano inequality-1} now follows
from the above \prettyref{eq:Cliff mult ineq} via the Bochner-Kodaira-Nakano
formula \prettyref{eq:Bochner Kodaira Nakano}. The second Nakano
inequality \prettyref{eq:Nakano inequality 2} follows by duality
using the identity $*\boxempty_{\mathcal{K}\otimes\mathcal{L}^{k}}^{q}*=\boxempty_{\mathcal{K}\otimes\mathcal{L}^{k}}^{n-q}$.

The second part \prettyref{eq:spin vanishing thm.} relating to the
vanishing theorem now follows easily from the Nakano inequality \prettyref{eq:Nakano inequality-1}
and the Hodge theorem 
\[
H^{q}\left(X;\mathcal{K}\otimes\mathcal{L}^{k}\right)=\textrm{ker}\left(\boxempty_{\mathcal{K}\otimes\mathcal{L}^{k}}^{q}\right)
\]
on the Kahler manifold $X$.
\end{proof}

\section{\label{sec:Computation-of-the} Computation of the eta invariant}

We now prove our main theorem \prettyref{thm:main computation thm}
on the computation of the eta invariant. To this end one first needs
to decompose the Dirac operator along Fourier modes on the unit circle
bundle as in \cite[Sec. 5]{Savale-Asmptotics}. 

Thus let $Y=S^{1}\mathcal{L}$ be the unit circle bundle of the positive
line bundle $\mathcal{L}$. Denote by $S^{1}\rightarrow Y\xrightarrow{\pi}X$
the fibration and $\pi$ the natural projection onto the base. Next,
$TS^{1}\subset TY$ denotes the subbundle of vertical tangent vectors
and $T^{H}Y\subset TY$ the subbundle of horizontal tangent vectors
corresponding to the Chern connection $\nabla^{\mathcal{L}}$. These
give a circle invariant splitting 
\begin{equation}
TY=TS^{1}\oplus T^{H}Y.\label{eq:connection splitting}
\end{equation}
 A metric $g^{TS^{1}}$ on the vertical tangent space is defined by
setting the generator $e\in TS^{1}$ of the natural circle action
to have unit norm $\left\Vert e\right\Vert _{TS^{1}}=1$. While a
metric $g^{T^{H}Y}=\pi^{*}g^{TX}$ on the horizontal tangent space
is obtained by pullback of the Riemannian metric $g^{TX}$ on the
base. This defines the family of adiabatic metrics 
\begin{equation}
g_{\varepsilon}^{TY}=g^{TS^{1}}\oplus\varepsilon^{-1}\pi^{*}g^{TX},\quad\forall\varepsilon>0,\label{eq:def. adiabatic metrics}
\end{equation}
on $Y$ as in \cite{Bismut-Cheeger}. 

Let $\nabla^{TY,\varepsilon},\nabla^{TX}$ denote the Levi-Civita
connections of $g_{\varepsilon}^{TY},g^{TX}$ respectively. Let $p^{TS^{1}},p^{T^{H}Y}$
denote the projections of $TY$ onto $TS^{1},T^{H}Y$ summands respectively.
Define a connection on vertical bundle $TS^{1}$ via $\nabla^{TS^{1}}=p^{TS^{1}}\nabla^{TY,\varepsilon}$.
It was shown in \cite[Sec. 4]{Bismut-Cheeger} that the connection
$\nabla^{TS^{1}}$ is independent of $\varepsilon$. In the case of
circle bundles it can be computed by showing that the unit section
$e$ is $\nabla^{TS^{1}}$ -parallel $\nabla^{TS^{1}}e=0$ \cite[eq. 5.3]{Savale-Asmptotics}.
A second connection $\nabla$ on $TY$ is defined via $\nabla=\nabla^{TS^{1}}\oplus\pi^{*}\nabla^{TX}$.
The difference tensor of this with the Levi-Civita connection is set
to be 
\begin{equation}
S^{\varepsilon}\coloneqq\nabla^{TY,\varepsilon}-\nabla.\label{eq:diff. tensor}
\end{equation}
With $\left\langle ,\right\rangle _{\varepsilon}=g_{\varepsilon}^{TY}$
denoting the adiabatic metric, the above is computed in \cite[eq. 5.4]{Savale-Asmptotics}
to be 
\begin{equation}
\left\langle S^{\varepsilon}(U)V,W\right\rangle _{\varepsilon}=\frac{1}{2}\left[\left\langle T(V,W),U\right\rangle _{\varepsilon}-\left\langle T(W,U),V\right\rangle _{\varepsilon}-\left\langle T(U,V),W\right\rangle _{\varepsilon}\right],\label{eq:Difference tensor in terms of Torsion tensor}
\end{equation}
where $T$ is the torsion tensor of the connection $\nabla$. The
torsion tensor is further computed in \cite[Sec. 5]{Savale-Asmptotics}
to be given by 
\begin{align}
i_{e}T & =0\nonumber \\
T\left(\tilde{U}_{1},\tilde{U}_{2}\right) & =R^{\mathcal{L}}\left(U_{1},U_{2}\right)\label{eq:calc torsion tensor}
\end{align}
for $\tilde{U}_{1},\tilde{U}_{2}$ the horizontal lifts of two vector
fields $U_{1},U_{2}$ on the base $X$. A particular consequence of
the above calculation is the existence of the adiabatic limit of the
Levi-Civita connection $\nabla^{TY,0}\coloneqq\lim_{\varepsilon\rightarrow0}\nabla^{TY,\varepsilon}$
\cite{Bismut-Cheeger}.

A spin structure on $X$ corresponds to a holomorphic, Hermitian square
root $\mathcal{K}$ of the canonical line bundle $\mathcal{K}^{\otimes2}=K_{X}$.
The corresponding bundles of positive and negative spinors are $S_{+}^{TX}=\Lambda^{\text{even}}T^{0,1*}\otimes\mathcal{K}$
and $S_{-}^{TX}=\Lambda^{\text{odd}}T^{0,1*}\otimes\mathcal{K}$.
The spin connection is given by $\nabla^{0,*}$, the corresponding
Chern connection on $\Lambda^{*}T^{0,1*}\otimes\mathcal{K}$. The
spin structure lifts to $Y$ with the spin bundle $S^{TY}=\pi^{*}\left(S_{+}^{TX}\oplus S_{-}^{TX}\right)$
and connection being the lifts from the base. The space of spinors
then decomposes 
\begin{align}
C^{\infty}(Y,S^{TY}) & =\bigoplus_{k\in\mathbb{Z}}C^{\infty}(X;(S_{+}^{TX}\oplus S_{-}^{TX})\otimes\mathcal{L}^{\otimes k})\label{eq:Decomposition of spinors}\\
 & =\bigoplus_{k\in\mathbb{Z}}C^{\infty}(X;\Lambda^{*}T^{0,1*}\otimes\mathcal{K}\otimes\mathcal{L}^{\otimes k}),
\end{align}
with the $k$-th summand above corresponding to the eigenspace of
the generator $e$ on the unit circle \cite[eq. 5.9]{Savale-Asmptotics}.
With respect to the above decomposition \prettyref{eq:Decomposition of spinors},
and using the calculations \prettyref{eq:Difference tensor in terms of Torsion tensor}
and \prettyref{eq:calc torsion tensor}, the spin-c Dirac operator
\prettyref{eq:Spin Dirac operator} further has a decomposition given
by 
\begin{equation}
\text{\ensuremath{D_{r,\varepsilon}}}=\bigoplus_{k}\begin{bmatrix}k-\varepsilon(N-\frac{m}{2})-r & (2\varepsilon)^{1/2}\\
(2\varepsilon)^{1/2}(\bar{\partial}_{k}+\bar{\partial}_{k}^{*}) & -k+\varepsilon(N-\frac{m}{2})+r
\end{bmatrix}\label{eq:Dirac op decomposition}
\end{equation}
\cite[eq. 5.11]{Savale-Asmptotics}. Here we use the shorthand $\sqrt{2}(\bar{\partial}_{k}+\bar{\partial}_{k}^{*})=\sqrt{2}(\bar{\partial}_{\mathcal{K}\otimes\mathcal{L}^{k}}+\bar{\partial}_{\mathcal{K}\otimes\mathcal{L}^{k}}^{*})$
for the Dirac operator on the base \prettyref{eq:Dirac operator base},
while $N$ is the number operator which acts as multiplication by
$p$ on $\Lambda^{p}T^{0,1*}$.

The decomposition of the Dirac operator \prettyref{eq:Dirac op decomposition}
allows for the followind description of its spectrum in terms of the
base manifold. \footnote{Here we have corrected a sign in \prettyref{eq:Dirac op decomposition}
as well as the quadratic formula calculation of the Type eigenvalue
2 in \prettyref{prop:Computation of Dirac Spectrum} from \cite{Savale-Asmptotics}.} 
\begin{prop}
(\cite[Prop. 5.2]{Savale-Asmptotics}) \label{prop:Computation of Dirac Spectrum}The
eigenvalues of the spin-c Dirac operator \textup{$D_{r,\varepsilon}$
}\prettyref{eq:Spin Dirac operator} are given by the two types 

\begin{enumerate}
\item Type 1:
\begin{equation}
\lambda=(-1)^{q}(k-\varepsilon(q-\frac{n}{2})-r),\;0\leq q\leq n,k\in\mathbb{Z}\label{eq:Type 1}
\end{equation}
with multiplicity \textup{$h^{q,k}=dim\,H^{q}(X,\mathcal{K}\otimes\mathcal{L}^{\otimes k})$.}
\item Type 2:
\begin{equation}
\lambda=\frac{(-1)^{q+1}\varepsilon\pm\sqrt{(2k-\varepsilon(2q+1-n)-2r)^{2}+4\mu^{2}\varepsilon}}{2},\;0\leq q\leq n,k\in\mathbb{Z}\label{eq:Type 2}
\end{equation}
and $\frac{1}{2}\mu^{2}$ is a positive eigenvalue of $\boxempty_{\mathcal{K}\otimes\mathcal{L}^{k}}^{q}$.
The multiplicity of $\lambda$ is $d_{\mu}^{q,k}=e_{\mu}^{q,k}-e_{\mu}^{q-1,k}+\ldots+(-1)^{q}e_{\mu}^{0,k}$
where $e_{\mu}^{q,k}$ is the multiplicity of $\frac{1}{2}\mu^{2}$.
\end{enumerate}
\end{prop}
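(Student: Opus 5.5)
The plan is to diagonalise $D_{r,\varepsilon}$ one Fourier mode at a time, using the block form \prettyref{eq:Dirac op decomposition}, and then to split each mode with the Hodge decomposition of the base Dirac operator $\sqrt{2}(\bar{\partial}_{k}+\bar{\partial}_{k}^{*})$. Fix $k\in\mathbb{Z}$. On $\Omega^{0,*}(X;\mathcal{K}\otimes\mathcal{L}^{k})$ the operator $D_{r,\varepsilon}$ acts, on the bidegree-$q$ component, by the diagonal term $(-1)^{q}(k-\varepsilon(q-\frac{n}{2})-r)$ together with the off-diagonal term $(2\varepsilon)^{1/2}(\bar{\partial}_{k}+\bar{\partial}_{k}^{*})$. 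This is because the grading operator $c(a)$ acts by $(-1)^{N}$, and the diagonal entries in \prettyref{eq:Dirac op decomposition} should be read as $(-1)^{N}(k-\varepsilon(N-\frac{n}{2})-r)$. Since $\bar{\partial}_{k}$ and $\bar{\partial}_{k}^{*}$ commute with the Kodaira Laplacian $\boxempty_{k}$, the space splits $L^{2}$-orthogonally into $\ker\boxempty_{k}$ and the eigenspaces $E_{\mu}^{q}$ of $\boxempty_{k}^{q}$ with positive eigenvalue $\frac{1}{2}\mu^{2}$. Each of these pieces is invariant under $D_{r,\varepsilon}$.

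\emph{Harmonic part (Type 1).} On $\ker\boxempty_{k}^{q}\cong H^{q}(X;\mathcal{K}\otimes\mathcal{L}^{k})$ the off-diagonal term vanishes, so $D_{r,\varepsilon}$ acts by the scalar $(-1)^{q}(k-\varepsilon(q-\frac{n}{2})-r)$. This gives \prettyref{eq:Type 1} with multiplicity $h^{q,k}$.

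\emph{Non-harmonic part (Type 2).} For $\mu>0$ I use the further Hodge splitting of $E_{\mu}^{q}$ into $\bar{\partial}$-exact and $\bar{\partial}^{*}$-exact pieces. Let $F_{\mu}^{q}=\bar{\partial}_{k}^{*}(E_{\mu}^{q+1})\subset E_{\mu}^{q}$ denote the coexact part. Then $\bar{\partial}_{k}$ maps $F_{\mu}^{q}$ isomorphically onto its image in $E_{\mu}^{q+1}$, since on $F_{\mu}^{q}$ one has $\bar{\partial}_{k}^{*}\bar{\partial}_{k}=\frac{1}{2}\mu^{2}$. For a unit vector $s\in F_{\mu}^{q}$, the two-dimensional span of $s$ and $\sqrt{2}\mu^{-1}\bar{\partial}_{k}s$ is $D_{r,\varepsilon}$-invariant. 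On it, $D_{r,\varepsilon}$ is the symmetric $2\times2$ matrix
\[
\begin{bmatrix}(-1)^{q}a_{q} & \varepsilon^{1/2}\mu\\
\varepsilon^{1/2}\mu & (-1)^{q+1}a_{q+1}
\end{bmatrix},\qquad a_{p}\coloneqq k-\varepsilon(p-\tfrac{n}{2})-r.
\]
The normalisation of the off-diagonal entry should be checked against the factor $\sqrt{2}$ in $D_{\mathcal{K}\otimes\mathcal{L}^{k}}$.

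Applying the quadratic formula, the trace is $(-1)^{q}(a_{q}-a_{q+1})=(-1)^{q}\varepsilon$. The discriminant is
\[
(a_{q}+a_{q+1})^{2}+4\varepsilon\mu^{2}=(2k-\varepsilon(2q+1-n)-2r)^{2}+4\mu^{2}\varepsilon.
\]
This reproduces \prettyref{eq:Type 2}, up to matching the sign convention $(-1)^{q}$ versus $(-1)^{q+1}$ with the indexing of the pair.

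The multiplicity of the pair indexed by $(q,\mu,k)$ is $\dim F_{\mu}^{q}$. Because the sequence
\[
0\to E_{\mu}^{0}\to E_{\mu}^{1}\to\cdots
\]
is exact for $\mu\neq0$, we have $\dim E_{\mu}^{q}=\dim F_{\mu}^{q}+\dim F_{\mu}^{q-1}$. Inverting this recursively gives $\dim F_{\mu}^{q}=e_{\mu}^{q,k}-e_{\mu}^{q-1,k}+\cdots+(-1)^{q}e_{\mu}^{0,k}$, which is the stated $d_{\mu}^{q,k}$. Completeness of the list of eigenvalues follows because the harmonic spaces together with these $2$-planes exhaust each Fourier mode orthogonally, and by \prettyref{eq:Decomposition of spinors} the Fourier modes exhaust $L^{2}(Y;S^{TY})$.

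\textbf{Main obstacle.} I expect the main difficulty to be bookkeeping rather than ideas. The sign $(-1)^{N}$ in the diagonal entries and the $\sqrt{2}$ normalisations must be made consistent, so that the trace comes out exactly $\pm\varepsilon$ with the stated sign. This is precisely the issue flagged in the footnote, so I would recompute the trace and determinant of the $2\times2$ block carefully from \prettyref{eq:Dirac op decomposition}.
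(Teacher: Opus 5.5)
Your argument is the one the paper relies on. The paper itself gives no proof beyond the block form and the citation to Prop.~5.2 of the earlier paper, and your steps match it: split into Fourier modes, read off Type 1 on harmonic forms, pair each coexact eigenform $s\in F^{q}_{\mu}$ with $t=\sqrt{2}\mu^{-1}\bar{\partial}_{k}s$, diagonalise the $2\times2$ block, and get $d^{q,k}_{\mu}=\dim F^{q}_{\mu}$ from exactness of the eigen-complex. The normalisation you wanted to check is fine. Since $\bar{\partial}_{k}^{*}\bar{\partial}_{k}s=\frac{1}{2}\mu^{2}s$, you get $\|t\|=1$. Moreover $(2\varepsilon)^{1/2}(\bar{\partial}_{k}+\bar{\partial}_{k}^{*})$ sends $s\mapsto\varepsilon^{1/2}\mu\,t$ and $t\mapsto\varepsilon^{1/2}\mu\,s$. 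One small imprecision: a single $E^{q}_{\mu}$ is not $D_{r,\varepsilon}$-invariant, only $\bigoplus_{q}E^{q}_{\mu}$ is. Your $2$-planes are what you actually use, so nothing breaks.

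The step that does not hold is your last one, where you attribute the mismatch between your trace $(-1)^{q}\varepsilon$ and the stated $(-1)^{q+1}\varepsilon$ to how the pair is indexed. Indexing the pair by its upper degree does flip that sign. But it also turns $2q+1-n$ into $2q-1-n$ and $d^{q,k}_{\mu}$ into $d^{q-1,k}_{\mu}$.

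With the discriminant and multiplicity as stated, the block is the $(q,q+1)$ block. Type 1 forces the diagonal to be $(-1)^{N}a_{N}$, so the trace is $(-1)^{q}(a_{q}-a_{q+1})=(-1)^{q}\varepsilon$, not $(-1)^{q+1}\varepsilon$. A quick check confirms this. Setting $\mu=0$ in the $2\times2$ block, its eigenvalues must be the diagonal entries $(-1)^{q}a_{q}$ and $(-1)^{q+1}a_{q+1}$. The expression $\frac{1}{2}((-1)^{q}\varepsilon\pm(a_{q}+a_{q+1}))$ gives exactly these. With $(-1)^{q+1}\varepsilon$ one instead gets, for even $q$, the values $a_{q+1}$ and $-a_{q}$.

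So your computation proves the proposition with $(-1)^{q}\varepsilon$ in the Type 2 formula. As displayed, the Type 2 eigenvalues are the negatives of the correct ones, relative to the stated Type 1 and block form. You should record this explicitly rather than claim agreement.

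The slip is harmless for the paper's later use. A Type 2 eigenvalue vanishes exactly when the square root equals $\varepsilon$, whichever sign the $\varepsilon$ term carries. Hence the spectral-flow argument goes through unchanged.
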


We shall now use the above in our calculation of the eta invariant.
To this end, let $\left\{ \nabla^{\delta}\right\} _{0\leq\delta\leq\varepsilon}$
be any family of connections on $TY$ such that $\nabla^{0}=\nabla^{TY,0},\nabla^{\varepsilon}=\nabla^{TY,\varepsilon}$.
This family determines a connection $\nabla^{TZ}$ on the tangent
bundle $TZ$ of $Z=Y\times[0,\varepsilon]_{\delta}$ via
\[
\nabla^{TZ}=d\delta\wedge\frac{\partial}{\partial\delta}+\nabla^{\delta}.
\]
 Let $R^{TZ}$ be the curvature of $\nabla^{TZ}$. By the Atiyah-Patodi-Singer
index theorem we have 
\begin{equation}
\eta^{r,\varepsilon}=\lim_{\varepsilon\rightarrow0}\eta^{r,\varepsilon}+2\left\{ \textrm{sf}\left\{ D_{r,\delta}\right\} _{0\leq\delta\leq\varepsilon}+\frac{1}{\left(2\pi i\right)^{m+1}}\int_{Z}\,\hat{A}(R^{TZ})\exp\left\{ rc\right\} \right\} .\label{eq:Transgression eta invariants}
\end{equation}

In the above, the first term is the adiabatic limit of the eta invariant
\cite{Bismut-Cheeger,Dai}. It was computed in \cite[Sec. 5.3.2]{Savale-Asmptotics}
to be 
\begin{align}
\lim_{\varepsilon\rightarrow0}\eta^{r,\varepsilon} & =\frac{1}{2}\int_{X}\hat{A}(X)\,\hat{\eta}_{r}\,\exp\left\{ rc\right\} ,\quad r\in\mathbb{Z},\label{eq:adiabatic eta}\\
\textrm{where }\qquad\hat{\eta}_{r} & =\begin{cases}
\frac{\exp\left((1-2\{r\})\frac{c}{2}\right)}{\sinh\left(\frac{c}{2}\right)}-\frac{1}{c/2}, & r\notin\mathbb{Z},\\
\left[\frac{\frac{c}{2}-\tanh\left(\frac{c}{2}\right)}{\frac{c}{2}\tanh\left(\frac{c}{2}\right)}\right], & r\in\mathbb{Z},
\end{cases}
\end{align}
via a modification of the arguments due to Zhang \cite{Zhang}. 

The last term is an integral of an transgression form involving the
$\hat{A}$-genus. On \cite[pg. 881]{Savale-Asmptotics}, the $\hat{A}$-genus
was computed to be 
\begin{align*}
\hat{A}(R^{TZ}) & =\Omega_{2}\exp\left\{ \Omega_{0}\right\} \\
\textrm{for }\quad\Omega_{0} & =2\textrm{tr}\left[p\left(R^{TX^{1,0}}+2i\delta\omega\right)\right]+2p\left(2i\delta\omega\right),\\
\textrm{and }\qquad\Omega_{2} & =2\textrm{tr}\left[ip'\left(R^{TX^{1,0}}+i2\delta\omega\right)\right]+i2p'\left(2i\delta\omega\right).
\end{align*}
 Thus the last integral term vanishes is computed to be 
\begin{equation}
\frac{1}{\left(2\pi i\right)^{m+1}}\int_{Z}\,\hat{A}(R^{TZ})\exp\left\{ rc\right\} =\int_{0}^{\varepsilon}d\delta\int_{X}\Omega_{2}\exp\left\{ \Omega_{0}\right\} \exp\left\{ rc\right\} .\label{eq:a hat vanishing}
\end{equation}

It now remains to compute the second term in the middle. This is the
spectral flow of the family of Dirac operators $\left\{ D_{r,\delta}\right\} _{0\leq\delta\leq\varepsilon}$
(see \cite[Ch. 3]{Savale-thesis2012}). Namely the number of eigenvalues
of $D_{r,\delta}$ that change sign from positive to negative as $\delta$
varies between $0$ and $\varepsilon$. The following theorem proves
that this spectral flow term vanishes under our hypotheses.
\begin{thm}
\label{thm:sf vanishes} Let $Y$ be the unit circle bundle of a positive
line bundle $\mathcal{L}\rightarrow X$ over a complex manifold of
real dimension $4m$. Suppose that the Ricci curvature of the Kahler
form $\omega=iR^{\mathcal{L}}$ satisfies $\textrm{Ric}_{\omega}\geq\kappa\omega$,
for $\kappa\geq0$. 

The the spectral flow for the family of spin-c Dirac operators vanishes
\begin{equation}
\textrm{sf}\left\{ D_{r,\delta}\right\} _{0\leq\delta\leq\varepsilon}=0\label{eq:sf vanishes}
\end{equation}
 for $\left|r\right|\leq\frac{1}{2}\kappa$.
\end{thm}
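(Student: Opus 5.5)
The plan is to use the explicit description of the spectrum in \prettyref{prop:Computation of Dirac Spectrum}. The spectral flow counts eigenvalues of $D_{r,\delta}$ that cross zero as $\delta$ runs over $[0,\varepsilon]$, and the eigenvalues are given in closed form as continuous functions of $\delta$ (with multiplicities independent of $\delta$). It therefore suffices to show that no eigenvalue branch vanishes for any $\delta\in(0,\varepsilon]$. A branch may vanish at $\delta=0$; we treat that endpoint separately via the usual convention for spectral flow and the way the adiabatic limit in \prettyref{eq:adiabatic eta} is defined. I will treat the Type 1 and Type 2 eigenvalues separately.

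\emph{Type 2 eigenvalues.} Squaring shows that $\lambda=0$ forces
\[
\delta^{2}=(2k-\delta(2q+1-n)-2r)^{2}+4\mu^{2}\delta .
\]
Here $\frac{1}{2}\mu^{2}$ is a positive eigenvalue of $\boxempty^{q}_{\mathcal{K}\otimes\mathcal{L}^{k}}$. By the Nakano inequalities \prettyref{eq:Nakano inequality-1} and \prettyref{eq:Nakano inequality 2} of \prettyref{thm: Spin-vanishing-and Nakano}, we have
\[
\tfrac{1}{2}\mu^{2}\ge \max\left\{q\left(k+\tfrac{\kappa}{2}\right),\,(n-q)\left(-k+\tfrac{\kappa}{2}\right)\right\}.
\]
I would combine this lower bound with the square $(2k-\delta(2q+1-n)-2r)^{2}$ to show that the right-hand side strictly exceeds $\delta^{2}$ for $\delta>0$. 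The case split is on the sign of $k$ and on $q$: for $k\geq0$ use the first estimate, for $k\le 0$ the second. The hypothesis $|r|\le\kappa/2$ is what lets the shift by $r$ be absorbed. For example, when $q\ge1$ and $k\ge0$ one has $4\mu^{2}\delta\ge 8q(k+\kappa/2)\delta$, and a quadratic inequality in $\delta$ finishes the argument. I expect this to be the main obstacle: the Type 2 formula mixes $\delta$ in the discriminant, and getting a clean uniform inequality over all $(q,k)$, including the borderline $q=0$ or $q=n$ cases where one Nakano bound degenerates, will require careful bookkeeping.

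\emph{Type 1 eigenvalues.} These occur only when $H^{q}(X;\mathcal{K}\otimes\mathcal{L}^{k})\neq0$, and such a branch vanishes exactly when $k-r=\delta(q-\frac{n}{2})$. By the vanishing theorem \prettyref{eq:spin vanishing thm.}, nonzero cohomology with $0<q<n$ forces both $k\le-\kappa/2$ and $k\ge\kappa/2$. This is impossible when $\kappa>0$, and when $\kappa=0$ it leaves only $k=0$. For $q=0$ we need $k\le -\kappa/2\le r$, so $k-r\le0$, while $\delta(q-\frac n2)=-\delta n/2<0$. For $q=n$ we get $k\ge\kappa/2\ge r$, so $k-r\ge0$, while $\delta(q-\frac n2)=\delta n/2>0$. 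So a crossing can only come from the boundary cases $k=\mp\kappa/2=\mp|r|$, and these need to be checked directly. The remaining case is $\kappa=0$, $k=0$, $r=0$, $0<q<n$. There the eigenvalue is $-(-1)^{q}\delta(q-\frac n2)$, which vanishes only at $q=n/2$, where it is identically zero and so contributes no sign change.

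Putting the two cases together, no eigenvalue changes sign on $(0,\varepsilon]$, and hence $\textrm{sf}\{D_{r,\delta}\}=0$. At the endpoint $\delta=0$, eigenvalues equal to zero stay on the same side for all small $\delta>0$, because each is a nonvanishing analytic function of $\delta$ on $(0,\varepsilon]$ or is identically zero. This is consistent with the spectral-flow convention used in \prettyref{eq:Transgression eta invariants}.
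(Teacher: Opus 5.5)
Your proposal has a genuine gap, and it is in the Type 2 step, where the real work of the proof lies. You never use the hypothesis $\dim_{\mathbb{R}}X=4m$, and that hypothesis is essential. Expanding the squared condition gives
\[
(2k-\delta(2q+1-n)-2r)^{2}+4\mu^{2}\delta-\delta^{2}=\bigl[(2q+1-n)^{2}-1\bigr]\delta^{2}+4(k-r)^{2}+4\bigl[\mu^{2}-(2q+1-n)(k-r)\bigr]\delta .
\]
Since $n$ is even, $2q+1-n$ is odd. So the $\delta^{2}$ coefficient is $\ge 0$, and it suffices to show $\frac{1}{2}\mu^{2}>(q-\frac{n-1}{2})(k-r)$.

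Evenness cannot be bypassed. For odd $n$ and $q=\frac{n-1}{2}$ the $\delta^{2}$ coefficient is $-1$, and a Type 2 branch genuinely reaches zero at $\delta=2\mu^{2}+2\sqrt{\mu^{4}+(k-r)^{2}}$. So no bookkeeping with the Nakano bounds alone gives an inequality uniform in $\varepsilon$.

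The reduced inequality must also be proved by splitting on the signs of $k-r$ and $q-\frac{n-1}{2}$, not on the sign of $k$:
\begin{itemize}
\item if $k>r$ and $q>\frac{n-1}{2}$, use $\frac{1}{2}\mu^{2}\ge q(k+\frac{\kappa}{2})\ge q(k-r)$;
\item if $k<r$ and $q<\frac{n-1}{2}$, use $\frac{1}{2}\mu^{2}\ge (n-q)(\frac{\kappa}{2}-k)\ge (n-q)(r-k)$;
\item otherwise the right-hand side is $\le 0$.
\end{itemize}
Your rule ``for $k\ge 0$ use the first estimate'' fails for $q=0$, $0\le k<r$. There the first bound is vacuous. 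With only $\mu^{2}>0$, the expression above equals $\delta(4\mu^{2}-\delta)<0$ at $\delta=2(r-k)/(n-1)$ as soon as $\mu^{2}<(r-k)/(2(n-1))$.

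Your Type 1 step reads the vanishing theorem backwards. Nonzero $H^{0}(X;\mathcal{K}\otimes\mathcal{L}^{k})$ forces $k\ge\frac{\kappa}{2}$, and nonzero $H^{n}$ forces $k\le-\frac{\kappa}{2}$, not the reverse. With your signs, for $q=0$ both $k-r\le 0$ and $-\delta n/2<0$ hold. Then $\lambda=k-r+\delta n/2$ would vanish at $\delta=2(r-k)/n$ whenever $k<r$. So your reduction to ``boundary cases'' does not follow, and those cases are left unchecked anyway.

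With the correct signs the step closes at once. For $q=0$, $k-r\ge\frac{\kappa}{2}-r\ge 0$, so $\lambda>0$ for all $\delta>0$. For $q=n$, $k-r\le 0$, so $(-1)^{n}\lambda=k-r-\delta n/2<0$. Your treatment of $0<q<n$ is correct.
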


\begin{proof}
The proof is a consequence of the description of the spectrum from
\prettyref{prop:Computation of Dirac Spectrum} along with the Nakano
estimate \prettyref{thm: Spin-vanishing-and Nakano}.

Namely, first note by virtue of the spin vanishing theorem \prettyref{eq:spin vanishing thm.}
that the eigenvalues of type 1 \prettyref{eq:Type 1} do not change
sign for $\left|r\right|\leq\frac{1}{2}\kappa$. And hence these do
not contribute to the spectral flow.

As for the type 2 eigenvalues \prettyref{eq:Type 2}, note first by
virtue of the Nakano estimate \prettyref{eq:Nakano inequality-1}
that 
\begin{equation}
\frac{1}{2}\mu^{2}\geq\begin{cases}
q\left(k+\frac{1}{2}\kappa\right)\\
\left(n-q\right)\left(-k+\frac{1}{2}\kappa\right)
\end{cases}\label{eq:nakano ev est}
\end{equation}
 for $0\leq q\leq n$, $k\in\mathbb{Z}$ and for $\frac{1}{2}\mu^{2}$
a positive eigenvalue of $\boxempty_{\mathcal{K}\otimes\mathcal{L}^{k}}^{q}$.
Following this, elementary considerations using the formula \prettyref{eq:Type 2}
show that this type of eigenvalue does not change value either for
$\left|r\right|\leq\frac{1}{2}\kappa$. To spell this out, it suffices
to show 
\begin{align*}
 & -\varepsilon+\sqrt{(2k-\varepsilon(2q+1-n)-2r)^{2}+4\mu^{2}\varepsilon}\geq0\\
\iff & (2k-\varepsilon(2q+1-n)-2r)^{2}+4\mu^{2}\varepsilon\geq\varepsilon^{2}\\
\iff & \left[(2q+1-n)^{2}-1\right]\varepsilon^{2}+\left(2k-2r\right)^{2}\\
 & \qquad+\left[-2\left(2q+1-n\right)\left(2k-2r\right)+4\mu^{2}\right]\varepsilon\geq0,
\end{align*}
by squaring and expansion. When the complex dimension is even, the
$\varepsilon^{2}$ coefficient above is non-negative. It thus suffices
to show the $\varepsilon$ coefficient is non-negtive or that $\frac{1}{2}\mu^{2}\geq\left(q+\frac{1-n}{2}\right)\left(k-r\right)$
. This follows easily from \prettyref{eq:nakano ev est} obtained
by the Nakano estimates \prettyref{eq:Nakano inequality-1}, \prettyref{eq:Nakano inequality 2}. 

\end{proof}
Our main theorem \prettyref{thm:main computation thm} now follows
from \prettyref{eq:Transgression eta invariants} using \prettyref{eq:adiabatic eta},
\prettyref{eq:a hat vanishing} and \prettyref{eq:sf vanishes}.

\section{\label{sec:Manifolds-of-general-type} Manifolds of general type }

It is natural to ask whether the Fano hypothesis is necessary to obtain
our main result. Here we show that this is indeed the case. As the
spin vanishing theorem \prettyref{thm: Spin-vanishing-and Nakano},
the vanishing of the spectral flow \prettyref{thm:sf vanishes}, and
consequently our main theorem \prettyref{thm:main computation thm},
do not hold over a manifold $X$ of general type; or when the anticanonical
bundle $K_{X}^{*}$ is negative.

To give the example, let 
\begin{equation}
X=\left\{ z|p\left(z\right)=0,\,\textrm{deg}p=d\right\} \subset\mathbb{CP}^{n+1}\label{eq:degree d hyp.}
\end{equation}
be a degree $d$ smooth hypersurface of projective space. Let $\mathcal{O}\left(1\right),\mathcal{O}\left(-1\right)$
be the hyperplane and tautological line bundles over projective space.
The anticanonical line bundle of $\mathbb{CP}^{n+1}$ is known to
be $K_{\mathbb{CP}^{n+1}}^{*}=\mathcal{O}\left(n+2\right)$. By the
adjunction formula, the anticanonical bundle of the hypersurface is
computed to be
\begin{align}
K_{X}^{*} & =K_{\mathbb{CP}^{n+1}}^{*}\otimes\mathcal{O}_{X}\left(-d\right)\nonumber \\
 & =\mathcal{O}_{X}\left(n+2-d\right).\label{eq:anti-canonical bundle hyp.}
\end{align}
Thus $K_{X}^{*}$ is negative for $d>n+2$. Moreover, for even degree
$d$, it has a square root given by $\mathcal{K}^{*}=\mathcal{O}_{X}\left(\frac{n}{2}+1-\frac{d}{2}\right)$.
Or given via $\mathcal{K}=\mathcal{O}_{X}\left(-\frac{n}{2}-1+\frac{d}{2}\right)$
as a square root of the canonical bundle that defines the spin structure. 

We now choose as $\mathcal{L}=\mathcal{O}_{X}\left(1\right)\rightarrow X$
the restriction of the hyperplane line bundle. This gives $\mathcal{K}\otimes\mathcal{L}^{k}=\mathbb{C}$
and hence 
\begin{equation}
H^{0}\left(X;\mathcal{K}\otimes\mathcal{L}^{k}\right)=H^{0}\left(X;\mathbb{C}\right)\neq0,\quad\textrm{for }k=\frac{n}{2}+1-\frac{d}{2}<0,\label{eq:nonvanishing in eg.}
\end{equation}
since it contains the constant functions. This shows that the spin
vanishing theorem \prettyref{eq:spin vanishing thm.} does not hold
in this example. As a consequence the eigenvalue $\lambda=(-1)^{q}(k-\varepsilon(q-\frac{n}{2})-r)$
of type 1 in \prettyref{eq:Type 1} crosses the origin at $\varepsilon=-\frac{2k}{n}$
for $q=r=0$. Thus the vanishing of the spectral flow \prettyref{thm:sf vanishes}
does not hold. And our formula for the eta invariant in \prettyref{thm:main computation thm}
would have to change.

\bibliographystyle{siam}
\bibliography{biblio}

\end{document}